\theoremstyle{plain}
\newtheorem{Theorem}{Theorem}[section]
\newtheorem{Algorithm}[Theorem]{Algorithm}
\newtheorem*{CSP}{Congruence Subgroup Problem}
\theoremstyle{definition}
\newtheorem{Remark}[Theorem]{Remark}
\newcommand{\Q}{\mathbb{Q}}
\newcommand{\R}{\mathbb{R}}
\newcommand{\Z}{\mathbb{Z}}
\newcommand{\Qp}{\mathbb{Q}_p}
\newcommand{\Zp}{\mathbb{Z}_p}
\newcommand{\ZS}{\mathbb{Z}_S}
\newcommand{\F}{\mathbb{F}}
\newcommand{\Fp}{\mathbb{F}_p}
\DeclareMathOperator{\M}{\mathrm{M}}
\DeclareMathOperator{\SL}{\mathrm{SL}}
\DeclareMathOperator{\PSL}{\mathrm{PSL}}
\DeclareMathOperator{\PGL}{\mathrm{PGL}}
\DeclareMathOperator{\GL}{\mathrm{GL}}
\DeclareMathOperator{\Nm}{\mathrm{N}}
\newcommand{\HamQ}{\mathcal{H}}
\newcommand{\HamR}{\mathbb{H}}
\newcommand{\Hamp}{\mathcal{H}_p}
\newcommand{\Hur}{\mathcal{O}}
\newcommand{\HurS}{\mathcal{O}_S}
\newcommand{\Sunits}{\Gamma_S}
\newcommand{\PSunits}{\overline{\Gamma}_S}
\newcommand{\ProjHur}{\overline{\mathcal{O}}^*}
\newcommand{\Xp}{\mathfrak{X}_p}
\newcommand{\XS}{\mathfrak{X}_S}
\newcommand{\Rels}{\mathcal{R}}
\DeclareMathOperator{\Afour}{\mathrm{A}_4}
\title{Presentations for quaternionic $S$-unit groups}
\author{Ted Chinburg}
\affil{University of Pennsylvania \\ \url{ted@math.upenn.edu} }
\author{Holley Friedlander}
\affil{Williams College \\ \url{hf2@williams.edu}}
\author{Sean Howe}
\affil{University of Chicago \\ \url{seanpkh@gmail.com}}
\author{Michiel Kosters}
\affil{Universiteit Leiden \\ \url{mkosters@math.leidenuniv.nl}}
\author{Bhairav Singh}
\affil{ MIT \\ \url{bsingh@mit.edu}}
\author{Matthew Stover}
\affil{ Temple University\\ \url{mstover@temple.edu}}
\author{Ying Zhang}
\affil{University of Iowa\\ \url{ying-zhang-1@uiowa.edu}}
\author{Paul Ziegler}
\affil{ETH Zurich \\ \url{paul.ziegler@math.ethz.ch}}
\date{\today}
\begin{document}

\maketitle

%%%%%%%%%%%%%%%%%%%%
\section{Introduction and notation}\label{sec:intro}
%%%%%%%%%%%%%%%%%%%%

The purpose of this paper is to give presentations for projective $S$-unit groups of the Hurwitz order in Hamilton's quaternions over the rational field $\Q$. To our knowledge, this provides the first explicit presentations of an $S$-arithmetic lattice in a semisimple Lie group with $S$ large. In particular, we give presentations for groups acting irreducibly and cocompactly on a product of Bruhat--Tits trees. We also include some discussion and experimentation related to the congruence subgroup problem, which is open when $S$ contains at least two odd primes. In the appendix, we provide code that allows the reader to compute presentations for an arbitrary finite set $S$.

We now introduce the objects studied in this paper, assuming some familiarity with the theory of quaternion algebras over number fields, e.g., from \cite{Vigneras}. Throughout this paper, $\HamQ$ will denote the rational quaternion algebra with basis $\{1, I, J, IJ\}$ subject to the relations
\[
I^2 = J^2 = -1 \quad \quad I J = - J I.
\]
Tensoring over the real numbers, we have $\HamQ \otimes_\Q \R \cong \HamR$, where $\HamR$ denotes Hamilton's quaternions. Let $\Hur$ denote the \emph{Hurwitz order} of $\HamQ$, i.e., the maximal $\Z$-order with basis
\[
\left\{ 1,\ I,\ J,\ \frac{1}{2}(1 + I + J + I J) \right\}.
\]
For each prime $p$, let $\Hamp = \HamQ \otimes_\Q \Qp$ be the completion of $\HamQ$ at $p$. If $p$ is odd, we fix an isomorphism $\Hamp \cong \M_2(\Qp)$ that identifies $\Hur \otimes_\Z \Z_p$ with $\M_2(\Z_p)$. The algebra $\HamQ_2$ is isomorphic to the unique quaternion division algebra over $\Q_2$ with unique maximal $\Z_2$-order $\Hur \otimes_\Z \Z_2$. Let $\Nm$ denote the reduced norm on $\HamQ$, which is given by
\[
\Nm(a + b I + c J + d I J) = a^2 + b^2 + c^2 + d^2.
\]

For any finite, possibly empty, set of rational primes $S$, set $\Z_\emptyset = \Z$ and for $S = \{p_1, \dots, p_r\}$ the $S$-\emph{integers} are
\[
\ZS = \Z \left[ \frac{1}{p_1}, \dots, \frac{1}{p_r} \right].
\]
Also, set
\[
m_S = \prod_{p \in S} p.
\]
Let $\HurS$ be the $S$-\emph{order} $\Hur \otimes_\Z \ZS$. For any such $S$, $\Sunits$ will denote the $S$-\emph{unit group} of invertible elements in the ring $\HurS$. It is well-known that $\Gamma_\emptyset = \Hur^*$ is the binary tetrahedral group \cite[\S V.3. A]{Vigneras}, which also isomorphic to the group $\SL_2(\F_3)$.

To find presentations for $\Sunits$ (more precisely, the group $\PSunits$ of $S$-units modulo scalars), we study the discrete action of $\Sunits$ on a product of \emph{Bruhat--Tits trees}. For any odd prime $p$, let $\Xp$ be the Bruhat--Tits tree associated with $\PGL_2(\Qp)$. See \cite{Serre} for the construction of $\Xp$. For $p = 2$ let $\Xp$ be a single point with trivial action by $\HamQ_2^*$. For $S$ as above, define
\[
\XS = \prod_{p \in S} \Xp.
\]
Under the homomorphism 
\[
\alpha_S : \Sunits \to \prod_{p \in S} \Hamp^* 
\]
we obtain an action of $\Sunits$ on $\XS$ via our chosen isomorphism between $\Hamp^*$ and $\mathrm{GL}_2(\Q_p)$ for odd $p$ and the natural action of $\PGL_2(\Qp)$ on $\Xp$. It is well know that this action is discrete and cocompact, i.e., the natural simplicial structure on $\XS$ coming from the trees $\Xp$ makes $\Sunits \backslash \XS$ a finite CW complex. (This is also clear from Theorem \ref{thm:VertexTransitive} below.) Note that the scalars in $\Sunits$ act trivially, so the $\Sunits$ action factors through the projection onto $\PSunits$.

%%%%%%%%%%%%%%%%%%%%
\subsubsection*{Acknowledgments} We thank the organizers of the 2012 Arizona Winter School, where this work began. This material is based upon work supported by the National Science Foundation under Grant Numbers NSF 0943832 and 1361000 and the National Science Foundation Graduate Research Fellowship Program under Grant No.\ DGE-1144082.
%%%%%%%%%%%%%%%%%%%%

%%%%%%%%%%%%%%%%%%%%
\section{A fundamental domain and the basic algorithm}\label{sec:methods}
%%%%%%%%%%%%%%%%%%%%

Our presentation for $\PSunits$ depends on finding a nice fundamental domain for its action on $\XS$. By a \emph{vertex} of $\XS$, we will always mean a product of vertices of the factors. If $x$ is a vertex of $\Xp$, let $N(x)$ be the set of neighboring vertices. If $v = (v_p)_{p \in S}$ is a vertex of $\XS$, let $N(v) = \prod N(v_p)$.

For each odd $p \in S$, let $v_{0, p}$ be the vertex of the tree $\Xp$ associated with the standard lattice
\begin{equation}
\label{eq:standardL}
\mathcal{L}_{0, p} = \Zp \oplus \Zp.
\end{equation}
Under the action of $\Hamp^* = \GL_2(\Q_p)$ on $\Xp$, $v_{0, p}$ is stabilized by
\[
(\Hur \otimes_\Z \Z_p)^* \cdot \Q_p^* = \mathrm{GL}_2(\Z_p) \cdot \Q_p^*.
\]
If $2$ is in $S$, let $v_{0, p}$ be the unique vertex of $\Xp$. Our fundamental domain is an open neighborhood of the vertex $v_0 = (v_{0, p})_{p \in S}$ in $\XS$.

Note that the stabilizer of $v_0$ in $\Sunits$ is $\Hur^* \cdot \Z_S$. We will apply the following result, which two of the authors proved as an application of the main result of their joint paper \cite{Chinburg--Stover}. For completeness we give an alternate proof of a more classical flavor that replaces \cite{Chinburg--Stover} with an application of Jacobi's theorem on sums of four squares.

%%%%%%%%%%%%%%%%%%%%
\begin{Theorem}\label{thm:VertexTransitive}
The action of $\Sunits$ (hence of $\PSunits$) on $\XS$ is vertex transitive.
\end{Theorem}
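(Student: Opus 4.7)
The plan is to prove that $\Sunits$ acts transitively on vertices by induction on the product-tree distance $d(v_0, v) := \sum_{p \in S} d_{\Xp}(v_{0,p}, v_p)$, where $v = (v_p)$ is an arbitrary vertex of $\XS$. Given $v$ at positive distance from $v_0$, I pick $p \in S$ with $d_{\Xp}(v_{0,p}, v_p) > 0$, replace the $p$-coordinate of $v$ by the neighbor of $v_p$ one step closer to $v_{0,p}$, apply the inductive hypothesis to this modified vertex (which has smaller distance from $v_0$), and translate back by the resulting element of $\Sunits$. This reduces the theorem to the following claim: for each odd $p \in S$ and each of the $p+1$ neighbors $v_p'$ of $v_{0,p}$ in $\Xp$, there exists $\gamma \in \Sunits$ with $\gamma v_{0,p} = v_p'$ and $\gamma v_{0,q} = v_{0,q}$ for every $q \in S \setminus \{p\}$. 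The cases $S = \emptyset$ and $S = \{2\}$ are trivial since $\XS$ is a single vertex.

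The key input is Jacobi's four-squares theorem. From $r_4(p) = 8(p+1)$ and $r_4(4p) = 24(p+1)$, combined with the observation that squares are $0$ or $1$ modulo $4$ (so in any representation of $4p$ as a sum of four squares the summands are either all even or all odd), I obtain exactly $8(p+1) + 16(p+1) = 24(p+1)$ elements $\gamma \in \Hur$ with $\Nm(\gamma) = p$: the $8(p+1)$ Lipschitz elements and the $16(p+1)$ half-integer Hurwitz elements. Each such $\gamma$ lies in $\Sunits$ (since $p \in \ZS^*$), and fixes $v_{0,q}$ for every $q \in S \setminus \{p\}$: for odd $q \neq p$ the image of $\gamma$ in $\GL_2(\Q_q)$ lies in $\GL_2(\Z_q) = \mathrm{Stab}(v_{0,q})$ because $\det\gamma = p$ is a $q$-adic unit, while the case $q = 2$ is vacuous.

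It remains to show that these $24(p+1)$ elements hit all $p+1$ neighbors of $v_{0,p}$. The binary tetrahedral group $\Hur^*$ has order $24$, acts freely by right multiplication on $\{\gamma \in \Hur : \Nm(\gamma) = p\}$, and each orbit maps to a single neighbor of $v_{0,p}$ because $\Hur^*$ is contained in the stabilizer $\GL_2(\Z_p)$ of $v_{0,p}$. Conversely, if $\gamma_1 v_{0,p} = \gamma_2 v_{0,p}$, then $h := \gamma_2^{-1}\gamma_1 \in \HamQ^*$ has reduced norm $1$, its $p$-adic image has determinant $1$ and fixes $v_{0,p}$ (so lies in $\GL_2(\Z_p)$), and for each $q \neq p$ the $q$-adic invertibility of $\gamma_2$ in the local maximal order $\Hur \otimes \Z_q$ (automatic since $\Nm(\gamma_2) = p$ is a $q$-unit) forces $h \otimes 1 \in \Hur \otimes \Z_q$. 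A local-global argument then gives $h \in \Hur$, and $\Nm(h) = 1$ gives $h \in \Hur^*$. Hence $24(p+1)/24 = p+1$ right $\Hur^*$-orbits map bijectively to the $p+1$ neighbors of $v_{0,p}$, completing the proof of the claim. The main obstacle in this plan is the place-by-place verification of local integrality of $h$ needed to identify $\Hur^*$-orbits with same-neighbor equivalence classes, particularly at $q = 2$, where the local algebra is a division algebra and the maximal-order structure of $\Hur \otimes \Z_2$ must be invoked.
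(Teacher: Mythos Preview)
Your proof is correct and follows essentially the same route as the paper's: both reduce to showing that the $24(p+1)$ Hurwitz elements of reduced norm $p$ (counted via Jacobi's four-squares theorem) fall into exactly $p+1$ right $\Hur^*$-cosets that biject with the neighbors of $v_{0,p}$, the remaining coordinates being fixed. The only cosmetic differences are that you phrase the reduction as an induction on $\sum_p d_{\Xp}(v_{0,p},v_p)$ rather than a minimal-counterexample argument, and you spell out the local-global verification that $\gamma_2^{-1}\gamma_1 \in \Hur$ place by place, whereas the paper invokes the earlier-stated description of the stabilizer of $v_0$ in $\Sunits$ as $\Hur^*\cdot\ZS^*$.
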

%%%%%%%%%%%%%%%%%%%%

%%%%%%%%%%%%%%%%%%%%
\begin{proof}
Let $d_{\Xp}$ be the distance function on $\Xp$ under which adjacent vertices are distance one and $d$ the distance on $\XS$ defined by summing the distances on components. If $\Sunits$ does not act transitively on $\XS$, then there is a vertex $v$ with minimal positive distance from $v_0$ that is not in the orbit $\Sunits \cdot v_0$. We claim that $d(v_0,v) = 1$. If not, then there is a vertex $v'$ with $d(v_0,v') < d(v_0,v)$ and $d(v',v) < d(v_0,v)$, since the $\Xp$ are trees. However, then there is a $\sigma \in \Sunits$ such that $\sigma v_0 = v'$ and $d(v_0,\sigma^{-1} v) = d(v',v) < d(v_0,v)$, so there is a $\tau \in \Sunits$ with $\tau v_0 = \sigma^{-1} v$. This would give $\sigma \tau v_0 = v$, contradicting $v \not \in \Sunits \cdot v_0$, so in fact $d(v_0,v) = 1$.

We conclude that $v = (v_p)_{p \in S}$ has the property that there is a unique odd $p \in S$ such that $v_p \ne v_{0, p}$. Consider the set $N(v_{0, p})$ of vertices in $\Xp$ adjacent to $v_{0, p}$. To prove the theorem, it suffices to show that $N(v_{0, p}) = T(p) \cdot v_{0, p}$ for each $p \in S \setminus \{2\}$, where $T(p) \subset \Sunits$ is the set of elements of $\Hur$ with reduced norm $p$. Indeed, this proves that $\PSunits$ sends $v_0$ to any vertex of $\XS$ distance one from $v_0$ which, by the previous paragraph and the fact that $T(p) \cdot v_{0, q} = v_{0, q}$ for $q \neq p$, implies that $\PSunits$ must act transitively on the vertices of $\XS$.

It is easy to see that indeed $T(p) \cdot v_{0, p} \subseteq N(v_{0, p})$. If $\sigma, \tau \in T(p)$ with $\tau v_0 = \sigma v_0$, then $\tau^{-1} \sigma$ stabilizes $v_0$ and has reduced norm $1$, so it lies in $\Hur^*$. Thus, to prove that $T(p) \cdot v_{0, p} = N(v_{0, p})$ it suffices to show that there are exactly $p + 1 = \# N(v_{0, p})$ cosets of $\Hur^*$ in $T(p)$. This is a consequence of $\# \Hur^* = 24$ together with Jacobi's theorem on sums of four squares. More precisely, there are exactly $24 (p + 1)$ elements of odd prime reduced norm $p$ in $\Hur$. To see this, note that the equation $(a/2)^2 + (b/2)^2 + (c/2)^2 + (d/2)^2 = p$ for $a, b, c, d \in \Z$ implies that they are either all even or all odd and thus
\[
\frac{1}{2}(a + b I + c J + d I J) \in \Hur.
\]
Now multiply by 4 and apply the even case of Jacobi's theorem.
\end{proof}
%%%%%%%%%%%%%%%%%%%%

%%%%%%%%%%%%%%%%%%%%
\begin{Remark}\label{rem:NeighborTransitive}
In fact, the proof shows something stronger, which we will need in Algorithms \ref{alg:okay} and \ref{alg:better} below. For any element $\overline{g} \in \PSunits$ mapping $v_0$ to $v' = \{v'_p\}_{p \in S}$ with $d_{\Xp}(v_{0,p},v'_p)\leq 1$ for each $p \in S$, there exists an element $x \in \Hur$ of reduced norm dividing $m_S := \prod_{p\in S} p$ such that $\bar{x}=\bar{g}$.
\end{Remark}
%%%%%%%%%%%%%%%%%%%%

Applying Theorem \ref{thm:VertexTransitive}, we obtain a fundamental domain
\begin{equation}
\label{eq:Rdomain}
R_S = \prod_{p \in S} R_p
\end{equation}
for the action of $\Sunits$ on $\XS$, where
\begin{equation}\label{eq:Rpdomain}
R_p = \left\{ (x_p)_{p \in S}~:~d_{\Xp}(x_p, v_{0, p}) < \frac{2}{3}~\textrm{for all}~p \right\}.
\end{equation}
Here, by fundamental domain we mean an open set $U$ with compact closure such that $\Gamma_S \cdot U = \XS$. The action of $\Sunits$ on $\XS$ factors through $\PSunits$, and $R_S$ is also a fundamental domain for the action of $\PSunits$. Furthermore, the stabilizer of $v_0$ in $\PSunits$ is the image $\ProjHur$ of $\Hur^*$ in $\PSunits$, a finite subgroup of order 12 isomorphic to $\Afour$. We can then find a presentation for $\PSunits$ via the following theorem of Macbeath (\cite{Macbeath}, see also \cite[\S I.3]{Serre}).

%%%%%%%%%%%%%%%%%%%%
\begin{Theorem}\label{thm:Macbeath}
Let $X$ be a path connected and simply connected topological space, $G$ a group of homeomorphisms of $X$, and $U \subseteq X$ a path connected open set such that $G U = X$. Let
\[
\Sigma = \{ g \in G\ :\ U \cap g U \neq \emptyset \}
\]
and let $F(\Sigma)$ be the free group on $\Sigma$ with generators $x_\sigma$ for $\sigma \in \Sigma$. Then the homomorphism $F(\Sigma) \to G$ given by sending $x_\sigma$ to $\sigma$ is surjective with kernel equal to the normal subgroup of $F(\Sigma)$ generated by the words of the form $x_\sigma x_\tau x_{\sigma \tau}^{-1}$ for every pair $\sigma, \tau \in \Sigma$ such that
\[
U \cap \sigma U \cap \sigma \tau U \neq \emptyset.
\]
\end{Theorem}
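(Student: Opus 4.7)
My plan is to prove the theorem in three steps, corresponding to surjectivity, trivial verification that the relators lie in the kernel, and the substantive fact that the kernel is normally generated by them.

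First, I would establish surjectivity, equivalently that $\Sigma$ generates $G$. Fix a basepoint $u_0 \in U$. For any $g \in G$, the path connectedness of $X$ gives a path $\gamma$ from $u_0$ to $gu_0$. Compactness of the image, together with the open cover $\{hU\}_{h \in G}$ of $X$, lets me choose finitely many translates $h_0 U, h_1 U, \ldots, h_n U$ with $h_0 = 1$, $h_n = g$, and $\gamma^{-1}(h_{i-1} U) \cap \gamma^{-1}(h_i U) \neq \emptyset$. Then $\sigma_i := h_{i-1}^{-1} h_i \in \Sigma$ and $g = \sigma_1 \cdots \sigma_n$.

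Second, each relator $x_\sigma x_\tau x_{\sigma\tau}^{-1}$ trivially maps to $\sigma\tau(\sigma\tau)^{-1} = 1$, so lies in the kernel.

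Third, and here is where the simple connectedness of $X$ is used, I would show that every element of the kernel is in the normal closure of the relators. Given a word $w = x_{\sigma_1} \cdots x_{\sigma_n}$ mapping to $1 \in G$, set $g_0 = 1$ and $g_i = \sigma_1 \cdots \sigma_i$, so $g_n = 1$. I would construct a loop $\alpha_w$ at $u_0$ in $X$ by choosing, for each $i$, a point $p_i \in U \cap \sigma_i^{-1} U$, paths inside $U$ from $u_0$ to $p_i$ and from $\sigma_i p_i \in U$ back to $u_0$, translating these by $g_{i-1}$ and $g_i$ respectively, and concatenating. Since $X$ is simply connected, $\alpha_w$ extends to a continuous map $F : D^2 \to X$. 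Applying the Lebesgue number lemma to the pullback of $\{hU\}_{h \in G}$, choose a triangulation of $D^2$ fine enough that each closed $2$-simplex maps into some translate $hU$; label each vertex of the triangulation by a valid choice of $h$, compatibly on the boundary with the chosen $g_i$. An induction on the number of $2$-simplices, collapsing the disk one $2$-simplex at a time from the boundary, then reduces $w$ to the empty word. Each elementary move across a $2$-simplex with vertices labeled $h_1, h_2, h_3$ introduces (or removes) the relator $x_{h_1^{-1}h_2}\, x_{h_2^{-1}h_3}\, x_{h_1^{-1}h_3}^{-1}$, and the hypothesis $U \cap (h_1^{-1}h_2) U \cap (h_1^{-1}h_3) U \neq \emptyset$ is witnessed by the image of that $2$-simplex.

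The main obstacle is this last step: turning the topological null-homotopy into a sequence of word manipulations while ensuring that only the permitted relators appear. I expect that keeping careful track of which translate of $U$ contains each portion of $D^2$, and handling conjugation by the partial products $g_i$, will require the most care. A clean way to organize it is to introduce the auxiliary $2$-complex $K$ with vertex set $G$, an edge $h \to h\sigma$ for each $h \in G$ and $\sigma \in \Sigma$, and a $2$-cell filling every triangle $h, h\sigma, h\sigma\tau$ corresponding to a relator, and then lift $F$ to a simplicial map into $K$; simple connectedness of $K$ — which follows from the existence of this lift — is exactly the statement that the kernel is normally generated by the relators.
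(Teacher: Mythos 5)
The paper itself gives no proof of this theorem: it is quoted from Macbeath \cite{Macbeath} (see also \cite[\S I.3]{Serre}), so there is no internal argument to compare against. Your outline is essentially Macbeath's original proof, which is the standard one: surjectivity via a chain of overlapping translates along a path, the trivial verification that the relators die, and the converse by building a loop from a word in the kernel, filling it with a disk using simple connectedness, triangulating finely, labelling by group elements, and converting the filling into a product of conjugates of relators. Your closing reformulation in terms of the complex $K$ with vertex set $G$, edges for $\Sigma$, and $2$-cells for the relator triangles is the cleanest way to organize the last step, and is how modern treatments phrase it; the theorem is exactly the assertion that $K$ is simply connected.

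The one place where the sketch as written does not quite work is the fineness/labelling step. Requiring only that each closed $2$-simplex maps into a single translate, and then labelling each \emph{vertex} by ``a valid choice of $h$,'' does not yield your claim that the image of a $2$-simplex witnesses $h_1U\cap h_2U\cap h_3U\neq\emptyset$: if $h_i$ is merely chosen with $F(v_i)\in h_iU$, the three vertex translates need not share a point, and the translate containing the image of the triangle may differ from all three $h_i$. The standard fix is to subdivide until each closed vertex star is small relative to the Lebesgue number of $\{F^{-1}(hU)\}$, and to choose $h_v$ with $F(\mathrm{St}(v))\subseteq h_vU$; then every edge $(v,w)$ gives $F(e)\subseteq h_vU\cap h_wU$, so $h_v^{-1}h_w\in\Sigma$, and any point in the image of a $2$-simplex lies in all three of its vertex translates, which is exactly the triple-intersection hypothesis for the corresponding relator. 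Smaller repairs: the concatenation in your loop construction has an index slip (with $p_i\in U\cap\sigma_i^{-1}U$, translate the path $u_0\to\sigma_ip_i$ by $g_{i-1}$ and the path $p_i\to u_0$ by $g_i$); you must also compare the subdivided boundary word with $w$ modulo relators, which works because points on the $i$-th boundary arc lie in $g_{i-1}U$ (or $g_iU$) as well as in the star translates of nearby boundary vertices (this also uses $1\in\Sigma$); and ``collapsing one $2$-simplex at a time from the boundary'' tacitly invokes shellability of triangulated $2$-disks---true, but precisely the bookkeeping that your $K$-complex/van Kampen formulation avoids, so I would commit to that version of the final step.
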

%%%%%%%%%%%%%%%%%%%%

%%%%%%%%%%%%%%%%%%%%
\begin{Remark}\label{rem:MoreRelations}
Note that for any $\sigma$, $\tau \in \Sigma$, we can always add the relation $x_\sigma x_\tau = x_{\sigma\tau}$ to our presentation for $G$, and thus instead of checking triple intersections we can check for triples satisfying the weaker condition that $\sigma \tau \in \Sigma$ for $\sigma, \tau \in \Sigma$, at the price of adding some redundant relations.
\end{Remark}
%%%%%%%%%%%%%%%%%%%%

We then have the following algorithm. For simplicity, we consider $S$ not containing the prime $2$. Recall that $m_S = \prod_{p \in S} p$. In what follows, $\overline{X}$ denotes the image of a subset or element $X$ of $\Sunits$ in $\PSunits$.

%%%%%%%%%%%%%%%%%%%%
\begin{Algorithm}\label{alg:okay}\ 

\noindent
\textbf{Input:} A finite set $S$ of odd rational primes.

\noindent
\textbf{Output:} A presentation for $\PSunits$ with generators
\[
A = \left\{ a_{\overline{x}}\ :\ \overline{x} \in \overline{\Hur}\ \textrm{for}\ x \in \Hur\ \textrm{with}\ \Nm(x) \mid m_S \right\}
\]
and relations
\[
\Rels = \left\{ a_{\overline{\sigma}}\, a_{\overline{\tau}}\, {a_{\overline{\nu}}}^{-1} = 1~:~(a_{\overline{\sigma}},\, a_{\overline{\tau}},\, a_{\overline{\nu}}) \in Y \right\},
\]
where $Y$ is the set of triples $(a_{\overline{\sigma}}, a_{\overline{\tau}}, a_{\overline{\nu}}) \in A$ such that $\overline{\sigma}\, \overline{\tau} = \overline{\nu}$.
\end{Algorithm}
%%%%%%%%%%%%%%%%%%%%

%%%%%%%%%%%%%%%%%%%%
\begin{proof}
We need to show that the map $a_{\overline{x}} \mapsto \overline{x}$ is an isomorphism between the abstract group with generators $A$ and relations $\mathcal{R}$ and $\PSunits$. By Theorem \ref{thm:Macbeath} and Remark \ref{rem:MoreRelations}, it suffices to show that elements $a_{\overline{x}}$ of $A$ represent exactly those elements $\overline{x} \in \PSunits$ such that $\overline{x} R_S \cap R_S \neq \emptyset$, where $R_S$ is the fundamental domain for the action of $\PSunits$ defined in \eqref{eq:Rdomain}. Indeed, this means precisely that our generators are the generators given by Theorem \ref{thm:Macbeath}, and our relations are the relations given by Theorem \ref{thm:Macbeath} plus the possibly redundant relations considered in Remark \ref{rem:MoreRelations}.

If $a_{\overline{x}} \in A$ then for any $p \in S$, the associated element $x \in \Hur$ of reduced norm dividing $m_S$ has reduced norm in $p^\epsilon \Z_p^*$ for $\epsilon \in \{0, 1\}$ and hence either fixes $v_{0, p}$ (i.e., $\epsilon = 0$ and $x$ stabilizes the standard lattice $\mathcal{L}_{0, p}$ defined in \eqref{eq:standardL}) or maps $v_{0, p}$ to a neighbor in $\Xp$ (i.e., $\epsilon = 1$ and $x$ maps $\mathcal{L}_{0, p}$ to an index $p$ sublattice). When $\epsilon = 1$, notice that $x^{-1} v_{0, p}$ is also a neighbor of $v_{0, p}$, and thus the midpoint of the edge between $v_{0, p}$ and $x^{-1} v_{0, p}$ maps under $x$ to the midpoint of the edge between $v_{0, p}$ and $x\, v_{0, p}$. It follows that $\overline{x} R_p \cap R_p \neq \emptyset$ for each $p \in S$, with $R_p$ as in \eqref{eq:Rpdomain}. Thus $\overline{x} R_S \cap R_S \neq \emptyset$, and so $a_{\overline{x}}$ is one of the generators defined in Theorem \ref{thm:Macbeath}.

Conversely, suppose that $\gamma \in \Sunits$ has the property that $\gamma R_S \cap R_S \neq \emptyset$, so Theorem \ref{thm:Macbeath} says there should be a generator in $A$ associated with $\gamma$. It follows immediately from Remark \ref{rem:NeighborTransitive} that there exists some $x \in \Hur$ such that $\Nm(x) \mid m_S$ and $\overline{x} = \overline{\gamma}$. Therefore, $\gamma$ is associated with a generator $a_{\overline{x}} \in A$. Therefore $A$ is exactly the generating set from Theorem \ref{thm:Macbeath}, and we are done.
\end{proof}
%%%%%%%%%%%%%%%%%%%%

We also have the following improved algorithm, which produces more efficient presentations.

%%%%%%%%%%%%%%%%%%%%
\begin{Algorithm}\label{alg:better}\ 

\noindent
\textbf{Input:} A finite set $S$ of odd rational primes.

\noindent
\textbf{Output:} A presentation for $\PSunits$ with generators $A^\prime$ and relations $\Rels^\prime$.

The generators are of the form
\[
A^\prime = A_0 \cup \bigcup_{p \in S} A_p,
\]
where
\[
A_0 = \{ a_{\overline{x}}~:~\overline{x} \in \ProjHur,\ \overline{x} \neq 1 \},
\]
and $A_p$ consists of elements $a_{\overline{x}}$ for each vertex v in $N(v_{0,p})$, where we choose one $\overline{x} \in \overline{\Hur}$ such that $\overline{x}\, v_{0, p} = v$ for some $v \in N(v_{0, p})$ and $\overline{x}$ has a representative $x \in \Hur$ with $\Nm(x) = p$. Equivalently, $A_p$ consists of exactly one element for each of the $p+1$ orbits of the right action of $\mathcal{O}^*$ on the set elements of $\mathcal{O}$ of reduced norm $p$.

The relations $\Rels^\prime$ are all those of the following four types:
\begin{enumerate}

\item $a_{\overline{\sigma}} a_{\overline{\tau}} {a_{\overline{\nu}}}^{-1} = 1$ when $a_{\overline{\sigma}}, a_{\overline{\tau}}, a_{\overline{\nu}} \in A_0$ such that $\overline{\sigma}\, \overline{\tau} = \overline{\nu}$;

\item $a_{\overline{\sigma}} a_{\overline{\tau}} {a_{\overline{\nu}}}^{-1} = 1$ when $a_{\overline{\sigma}}, a_{\overline{\tau}} \in A_p$ for some $p \in S$ and $a_{\overline{\nu}} \in A_0$ with $\overline{\sigma}\, \overline{\tau} = \overline{\nu}$;

\item $a_{\overline{\sigma}} a_{\overline{\tau}} (a_{\overline{\nu}} a_{\overline{\alpha}} a_{\overline{\beta}})^{-1} = 1$ when $a_{\overline{\sigma}}, a_{\overline{\beta}} \in A_p$ and $a_{\overline{\tau}}, a_{\overline{\alpha}} \in A_q$ with $q < p$ and $a_{\overline{\nu}} \in A_0$ all satisfy $\overline{\sigma}\, \overline{\tau} = \overline{\nu}\, \overline{\alpha}\, \overline{\beta}$;

\item $a_{\overline{\nu}} a_{\overline{\sigma}} (a_{\overline{\tau}} a_{\overline{\mu}})^{-1} = 1$ when $a_{\overline{\sigma}}, a_{\overline{\tau}} \in A_p$ and $a_{\overline{\nu}}, a_{\overline{\mu}} \in A_0$ with $\overline{\nu}\, \overline{\sigma} = \overline{\tau}\, \overline{\mu}$.

\end{enumerate}
\end{Algorithm}
%%%%%%%%%%%%%%%%%%%%

%%%%%%%%%%%%%%%%%%%%
\begin{proof}
We first show that, considered as elements of $\PSunits$, the generators of Algorithm \ref{alg:okay} can be obtained from these generators.

Let $z \in \Hur$ be an element of reduced norm dividing $m_S$. Then $z$ maps $v_0 = (v_{0, p})_{p \in S}$ to a vertex
\[
v = (v_p)_{p \in S} \in N(v_0) = \prod_{p \in S} N(v_{0, p}).
\]
We claim there is an element $y = y_1 \cdots y_n \in \Hur$ with either $a_{\overline{y}_j} \in A_{p_j}$ ($p_1 < \cdots < p_n$) or $y_j = 1$, such that $y$ maps $v$ to $v_0$. First, note that each $y_i$ has reduced norm $p$, and hence fixes $y_{0, p_j}$ for $j \neq i$, but will permute its neighbors in $\mathfrak{X}_q$. Therefore, we take $y_i \in \Hur$ to be $1$ if $v_{p_i} = v_{0, p_i}$ or a representative in $\Hur$ of an element of $A_{p_i}$ that sends $y_{i + 1} \cdots y_n\, v_{p_i}$ to $v_{0, p_i}$ otherwise. Then $y_i \cdots y_n$ maps $v_{p_j}$ to $v_{0, p_j}$ for each $i \le i \le n$. The resulting element $y$ then maps $v$ to $v_0$, as claimed.

Then $z y$ is in the stabilizer of $v_0$, so the image of $z y$ in $\PSunits$ lies in $\ProjHur$. Therefore, there exists a unique $a_{\overline{x}} \in A_0 \sqcup \{1\}$ such that $\overline{z} = \overline{x}\, \overline{y}^{-1}$ in $\PSunits$, where $x \in \Hur$ represents $\overline{x} \in \ProjHur$. Since Algorithm \ref{alg:okay} shows that the $\overline{z}$ with $z \in \Hur$ of reduced norm dividing $m_S$ generate $\PSunits$, it follows that the elements $\overline{x}$ for $a_{\overline{x}} \in A^\prime$ also generate $\PSunits$.

The map $\psi$ from the group with generators $A^\prime$ and relations $\Rels^\prime$ to $\PSunits$ generated by $a_{\overline{x}} \mapsto \overline{x}$ is well defined since each element of $\Rels^\prime$ comes from an identity in $\PSunits$. Since we wrote any generator of $\PSunits$ coming from $A$ as a word in the $\psi(a_{\overline{x}})$ for $a_{\overline{x}} \in A^\prime$, $\psi$ is surjective. All that remains to be shown is that if $w$ is a word in the generators $A^\prime$ such that $\psi(w)$ is the identity, we can use the relations in $\Rels^\prime$ to reduce $w$ to the identity.

We first claim that, for every $a_{\overline{\sigma}} \in A_p$ and $a_{\overline{\nu}} \in A_0$, there exist $a_{\overline{\tau}} \in A_p$ and $a_{\overline{\mu}} \in A_0$ such that $\overline{\nu}\, \overline{\sigma} = \overline{\tau}\, \overline{\mu}$. In other words, we can move a generator from $A_p$ across a generator of type $A_0$ using relations in $\Rels^\prime$. The associated elements $\overline{\nu}\, \overline{\sigma} \in \overline{\Hur}$ fix $v_{0, q}$ for $p \ne q$ and send $v_{0, p}$ to a vertex adjacent to $v_{0, p}$. Thus there is a unique $a_{\overline{\tau}} \in A_p$ such that
\[
\overline{\nu}\, \overline{\sigma} \cdot v_{0, p} = \overline{\tau} \cdot v_{0, p}.
\]
Then $\overline{\tau}^{-1}$ also fixes $v_{0, q}$ for $q \ne p$, so $\overline{\tau}^{-1} \overline{\nu}\, \overline{\sigma}$ fixes $v_0$. Therefore
\[
\overline{\tau}^{-1} \overline{\nu}\, \overline{\sigma} = \overline{\mu}
\]
for some $\overline{\mu} \in \ProjHur$. This proves the claim.

Similarly, given $a_{\overline{\sigma}} \in A_p$ and $a_{\overline{\tau}}, \in A_q$ with $q < p$, we claim that there exist $a_{\overline{\beta}} \in A_p$, $a_{\overline{\alpha}} \in A_q$, and $a_{\overline{\nu}} \in A_0$ such that
\[
\overline{\sigma}\, \overline{\tau} = \overline{\nu}\, \overline{\alpha}\, \overline{\beta}.
\]
Indeed, $\overline{\sigma}\, \overline{\tau}$ fixes $v_{0, \ell}$ for $\ell \neq p, q$, and moves each of $v_{0, p}, v_{0, q}$ to a neighbor in its respective Bruhat--Tits tree. As in the proof of the previous claim, we can find $a_{\overline{\beta}} \in A_p$ and $a_{\overline{\alpha}} \in A_q$ so that
\[
\overline{\alpha}^{-1} \overline{\beta}^{-1} \overline{\sigma}\, \overline{\tau} \cdot v_0 = v_0,
\]
and is hence equal to some $\overline{\nu} \in \ProjHur$, which proves the claim. Note that a similar statement also holds for inverses of our generators.

By the above, we can use relations of type (3) to move a generator from $A_p$ or its inverse to the left across a generator from $A_q$ ($p < q$), at the expense of possibly introducing a generator of type $A_0$ to the left and, of course, possibly changing which element of $A_0$ or $A_p$ appears in the word. Similarly, relations of type (4) allow us to move a generator from $A_0$ or its inverse across a generator of type $A_p$ for some $p \in S$, again possibly changing which element of $A_0$ and $A_p$ appears. Applying these relations to consecutive positive or negative powers of generators appearing in the word, we can use the relations to assume that the word $w$ is of the form
\[
w = w_0 w_{p_1} \cdots w_{p_n}
\]
where $S = \{p_1, \dots, p_n\}$ ($p_1 < \cdots < p_n$) and $w_r$ is a (possibly empty) word in the generators from $A_r$, $r \in S \cup \{0\}$.

Since $\psi(w)$ is the identity, $\psi(w_p)$ must send $v_{0,p}$ to $v_{0,p}$ for each $p \in S$, since no $\psi(w_q)$ for $q \neq p$ moves $v_{0, p}$. Relations of type (2) allow us to replace $w_p$ with an element of $A_0$. Finally, using relations of type (1), we combine what remains into single generator corresponding to an element from $A_0$ that acts trivially on $\XS$ realized as an element of $\ProjHur$, and which therefore is the identity in the group with generators $A^\prime$ and relations $\Rels^\prime$. This completes the proof.
\end{proof}
%%%%%%%%%%%%%%%%%%%%

%%%%%%%%%%%%%%%%%%%%
\begin{Remark}
Using the results in \cite{Chinburg--Stover}, one can build explicit fundamental domains like those used in this section for the $S$-units of any definite quaternion algebra over $\Q$, provided $S$ is ``large enough'' as defined in $\cite{Chinburg--Stover}$. Typically the action will not be vertex transitive, but ideas analogous to the above will still give an algorithm to compute explicit presentations.
\end{Remark}
%%%%%%%%%%%%%%%%%%%%

%%%%%%%%%%%%%%%%%%%%
\section{Remarks on the congruence subgroup problem}\label{sec:csp}
%%%%%%%%%%%%%%%%%%%%

In this section, we use our presentations to conduct some experiments related to the congruence subgroup problem for $\Sunits$. We begin by briefly describing this problem, which is  open for the groups $\Sunits$ studied in this paper when $S$ contains at least two odd primes, i.e., when $\XS$ is a product of at least two trees. See \cite{Raghunathan} for more detailed surveys of the congruence subgroup problem.

A natural family of finite index subgroups of $\Sunits$ arise from the \emph{congruence subgroups}, which are defined as follows. Let $I$ be a nonzero two-sided ideal of $\HurS$. The \emph{congruence kernel of level} $I$, denoted $\Sunits(I)$, is subgroup of elements of $\Sunits$ congruent to the identity modulo $I$.

We can also define the congruence kernels as follows. For odd $p \notin S$, embed $\HamQ$ in $\M_2(\Qp)$. Since $\Hur$ is a maximal order of $\HamQ$, we can choose this embedding such that it induces an isomorphism of $\Hur \otimes_\Z \Zp$ with $\M_2(\Zp)$. It follows that $\Sunits$ embeds as a subgroup of $\GL_2(\Zp)$, from which we obtain a map
\[
\rho_{p^r} : \Sunits \to \GL_2(\Zp / p^r \Zp).
\]
The kernel of $\rho_{p^r}$ is precisely $\Sunits(I)$ for $I = \mathcal{P}^r$, where $\mathcal{P}$ is the prime of $\HurS$ with residue field $\Fp$, and we can extend this to arbitrary $I$ via the Chinese Remainder Theorem.

This leads to the following important question.

%%%%%%%%%%%%%%%%%%%%

\begin{CSP}
For every finite index subgroup $\Lambda$ of $\Sunits$, does there exists a nonzero two-sided ideal $I$ of $\HurS$ such that $\Sunits(I) < \Lambda$?
\end{CSP}

%%%%%%%%%%%%%%%%%%%%

If the answer to the above question is yes, then $\Sunits$ is said to have the \emph{congruence subgroup property} (CSP). Otherwise, we say that the CSP fails. When $S = \emptyset$, $\Sunits = \Hur^*$ is finite and one can find a proper ideal $I$ of $\Hur$ such that $\Hur^*(I) = \{1\}$. Therefore the trivial subgroup is a congruence kernel and $\Hur^*$ has the congruence subgroup property.

It is well known that $\Sunits$ does not have the CSP when $S$ contains exactly one odd prime. One way to see this is to observe that $\Sunits$ acts discretely and cocompactly on the tree $\XS$ in this case. It is shown in \cite[Part I]{Serre} that $\Sunits$ therefore contains a nonabelian free subgroup $F$ of finite index. Let $K$ be the kernel of a homomorphism of $F$ onto the alternating group $A_6$. We claim that $K$ is a finite index subgroup of $\Sunits$ that does not contain a congruence kernel. Indeed, one can show that $A_6$ is not isomorphic to a subquotient of
\begin{equation}\label{eq:CongruenceProduct}
\prod_{q \in \mathcal{Q}} \GL_2(\Z / q \Z)
\end{equation}
for any finite set $\mathcal{Q}$ of prime powers, and since every group $\Sunits / \Sunits(I)$ is a subgroup of some group of the form \eqref{eq:CongruenceProduct}, so $K$ cannot contain any $\Sunits(I)$.

%%%%%%%%%%%%%%%%%%%%

\begin{Remark}
Instead of $A_6$, we can choose one of the infinitely many finite groups that is not a quotient of a subgroup of some $\prod \GL_2(\Z / q \Z)$.
\end{Remark}

%%%%%%%%%%%%%%%%%%%%

Remarkably, the above basically sums up all our knowledge about the congruence subgroup problem for $S$-unit groups of $\HamQ$. There is a similar lack of knowledge for $S$-unit groups of arbitrary quaternion division algebras over number fields. The following exhausts the previous results that we know.

%%%%%%%%%%%%%%%%%%%%

\begin{Theorem}\label{thm:CSP}
Let $k$ be a number field and $B$ a $k$-quaternion algebra. For a maximal order $\Hur$ and a finite set of nonarchimedean places $S$ of $k$, let $\Sunits$ be the associated $S$-unit group.
\begin{enumerate}

\item If $\Sunits$ is finite, then it has the congruence subgroup property.

\item (Serre \cite{SerreCSP}) If $B \cong \M_2(k)$, then $\Sunits$ has the congruence subgroup property if and only if the ring of $S$-integers of $k$ contains a unit of infinite order.

\item (Lubotzky \cite{LubotzkyCSP}) If $B$ is a division algebra and $\Sunits$ is a lattice in $\GL_2(K)$ for any local field $K$ of characteristic zero, then $\Sunits$ does not have the congruence subgroup property.

\end{enumerate}
\end{Theorem}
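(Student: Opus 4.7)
The theorem splits into three independent statements. Parts (2) and (3) are the main theorems of \cite{SerreCSP} and \cite{LubotzkyCSP} respectively, so in a writeup I would simply cite those references; I might add a short paragraph recalling the strategies, but I would not reprove them. Only part (1) requires a self-contained argument, and that argument is essentially elementary.

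For (1), my plan is to exhibit a single nonzero two-sided ideal $I$ of $\HurS$ with $\Sunits(I) = \{1\}$. Once this is done, $\{1\} = \Sunits(I) \subseteq \Lambda$ for every subgroup $\Lambda \le \Sunits$, and CSP follows trivially. To build $I$, I would proceed as follows. For each non-identity $\gamma \in \Sunits$, the element $\gamma - 1$ is a nonzero element of $\Hur$; because $\Hur$ is finitely generated and torsion-free as a module over the ring of integers of $k$, only finitely many nonzero primes $\mathfrak{p}$ of that ring can satisfy $\gamma - 1 \in \mathfrak{p}\Hur$ (one sees this immediately by writing $\gamma - 1$ in a pseudo-basis). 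For each such $\gamma$, choose a prime $\mathfrak{p}_\gamma \notin S$ avoiding this finite exceptional set, put $I_\gamma = \mathfrak{p}_\gamma \HurS$, and finally set
\[
I = \bigcap_{\gamma \in \Sunits \setminus \{1\}} I_\gamma.
\]
This is a finite intersection (since $\Sunits$ is assumed finite) of nonzero two-sided ideals, hence a nonzero two-sided ideal of $\HurS$. By construction no non-identity element of $\Sunits$ is congruent to $1$ modulo $I$, so $\Sunits(I) = \{1\}$ as required.

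The only genuine obstacle, were one trying to give a self-contained proof of the full theorem, lies in (2) and (3). Serre's proof of (2) combines Mennicke-symbol calculations with bounded-generation arguments for $\SL_2$ over $\ZS$: the presence of a unit of infinite order supplies the commutator identities that force every finite-index normal subgroup of $\Sunits$ to contain a principal congruence subgroup, while in its absence one exhibits non-congruence quotients directly. Lubotzky's proof of (3) exploits the cocompact action of $\Sunits$ on the Bruhat--Tits tree of $\GL_2(K)$ to produce a finite-index free subgroup and then manufactures finite quotients (for instance, large alternating groups) that cannot appear as subquotients of any $\Sunits / \Sunits(I)$, exactly along the lines of the one-odd-prime discussion preceding the theorem. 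In the present writeup these are cited, and the only original work is the routine argument for (1) sketched above.
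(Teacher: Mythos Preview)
Your proposal is correct and matches the paper's treatment: the paper states this theorem without proof, as a survey of known results, with parts (2) and (3) attributed to the cited references and part (1) being the elementary observation---sketched in the text just before the theorem for the special case $S = \emptyset$---that when $\Sunits$ is finite one can choose an ideal $I$ with $\Sunits(I) = \{1\}$. One minor slip: you write that $\gamma - 1$ is a nonzero element of $\Hur$, but $\gamma \in \Sunits \subset \HurS$, so you should work in $\HurS$ over the ring of $S$-integers of $k$; the argument is otherwise unchanged.
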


%%%%%%%%%%%%%%%%%%%%

For $B$ and $\Sunits$ as in Theorem \ref{thm:CSP}, we know of no further results about the congruence subgroup problem. In particular, as far as we know, for $\HamQ$ the congruence subgroup problem for $\Sunits$ is open in all cases except when $S$ contains at most one odd prime. It is sometimes called Serre's Conjecture that when $S$ contains at least two odd primes, $\Sunits$ and $\PSunits$ should have the congruence subgroup property. The remainder of this section collects some data related to this important open question.

Using MAGMA \cite{MAGMA}, we computed finite index subgroups of $\PSunits$ for various $S$ containing at least two primes. All of our observations support the congruence subgroup property holding when $S$ contains at least two odd primes. We considered all finite quotients of $\PSunits$ of order at most $n$ for some small $S$. In the table below we tabulate all the groups which occur in composition series for such quotients for the $S$ and $n$ indicated.
\begin{table}[h]
\begin{center}
\begin{tabular}{|c|c|c|c|}
\hline
$S$ & $n$ & $\Z / m \Z$ & $\PSL_2(\F_p)$ \\
\hline
$\{3, 5\}$ & $30{,}000$ & $2, 3$ &$7, 11, 13, 17, 19, 23, 29, 31$ \\
\hline
$\{5, 11\}$ & $20{,}000$ & $2, 3$ & $7, 13, 17, 19, 23$ \\
\hline
$\{37, 43\}$ & $15{,}000$ & $2, 3, 5$ & $5, 7, 11, 13, 17, 19, 23$ \\
\hline
$\{3, 5, 7\}$ & $20{,}000$ & $2, 3$ & $11, 13, 17, 19, 23$ \\
\hline
$\{3, 7, 11\}$ & $20{,}000$ & $2, 3, 5$ & $5, 13, 17, 19, 23$ \\
\hline
$\{7, 11, 13\}$ & $15{,}000$ & $2, 3, 5$ & $5, 17, 19, 23$ \\
\hline
$\{11, 13, 17, 19\}$ & $10{,}000$ & $2, 3$ & $5, 7$ \\
\hline
$\{3, 5, 7, 11, 13\}$ & $10{,}000$ & $2, 3$ & $17, 19$ \\
\hline
\end{tabular}
\caption{Groups appearing in the composition series for finite quotients of $\PSunits$ of order $\le n$.}
\end{center}
\end{table}

We now briefly describe why our experiments are consistent with the congruence subgroup property, even though many quotients are not exactly of the form $\PSL_2(\Z / N \Z)$. Cyclic composition factors arise from either the congruence subgroups of the unique maximal order in $\Hur_2$, quotients of the binary tetrahedral group, or the soluble part of $\PSL_2(\Z / p^n \Z)$. Finally, note in particular that there are no homomorphisms onto $\mathrm{A}_6$.

%%%%%%%%%%%%%%%%%%%%

\section{Presentations}\label{sec:presentations}

%%%%%%%%%%%%%%%%%%%%

We now tabulate presentations for $\PSunits$ for several small values of $S$. Since we are considering the projective units,we give an integral representative for each generator, that is, an element of $\Z[1, I, J, I J]$. To our knowledge, these are the first known presentations of $S$-arithmetic lattices in a semisimple Lie group where $S$ contains at least two places at which the associated algebraic group is isotropic (i.e., not compact). Unpublished work of the late Fritz Grunewald presented some groups $\Sunits^1$, the subgroup of elements of reduced norm $1$, for many small $S$ contained in this paper, but unfortunately we were not able to obtain a copy of this work.

%%%%%%%%%%%%%%%%%%%%

\subsubsection*{S = \{3, 5\}}

\noindent
Generators:
\begin{align*}
a =& -1 + I - J - 3 I J \\
b =& -9 - 7 I - J + 7 I J
\end{align*}

\noindent
Relators:
\begin{align*}
r_1 =& (b^{-1} a^{-1} b a^{-1})^3 \\
r_2 =& (b^{-1} a^{-2} b a^{-1} b^{-1} a^{-1})^2 \\
r_3 =& (a^{-1} b^{-1} a^{-1} b^{-1} a^{-1} b a^{-1})^2 \\
r_4 =& b^{-1} a b a b^{-1} a^{-1} b^2 a b^{-1} a b a^2 b^{-1} a b a b^{-1} a^2 b a^2 b^{-1} a^{-1} b a^{-2} b^{-1} a^{-2} \\
r_5 =& (b a^2 b^{-1} a b a^{-1} b)^2 \\
r_6 =& b^{-1} a^3 b a^2 b^{-1} a b^{-1} a^{-2} b a^{-1} b^{-1} a \\
r_7 =& b^{-2} a^{-1} b a^{-1} b^{-1} a b a^2 b^{-2} a^{-2} b a^{-1} \\
r_8 =& a b^{-1} a^2 b a^{-1} b^{-1} a^{-2} b a^{-2} b^{-1} a b a
\end{align*}

%%%%%%%%%%%%%%%%%%%%

\subsubsection*{S = \{3, 7\}}

\noindent
Generators:
\begin{align*}
a =& -1 + I - J - 3 I J \\
b =& -1 - I - J - 5 I J
\end{align*}

\noindent
Relators:
\begin{align*}
r_1 =& b a b a^{-2} b a b^{-1} a^{-1} b^{-1} a^2 b^{-1} a^{-1} \\
r_2 =& a^3 b a^{-2} b a b a^2 b^{-1} a^{-1} b^{-3} a^{-1} b^{-1} \\
r_3 =& b a b^{-1} a^{-1} b^{-1} a^{-1} b a b^2 a b^2 a^{-2} b a b \\
r_4 =& (a^2 b^{-1} a^{-1} b^{-2} a^{-1} b^{-1} a)^2 \\
r_5 =& a b^3 a b^3 a b a^{-2} b a^3 b a^{-2} b \\
r_6 =& b^{-2} a b^2 a b a^{-2} b^3 a b a^{-2} b^2 a^2 b^{-1} a^{-1} b^{-2} a^{-1} b^{-2} a^{-1}
\end{align*}

%%%%%%%%%%%%%%%%%%%%

\subsubsection*{S = \{3, 11\}}

\noindent
Generators:
\begin{align*}
a =& 1 + I - J - I J \\
b =& -1 - I - J - 3 I J \\
c =& -1 + I- 3 I J
\end{align*}

\noindent
Relators:
\begin{align*}
r_1 =& a^3 \\
r_2 =& (b^{-1} c a^{-1})^2 \\
r_3 =& (b, a^{-1})^2 \\
r_4 =& (c^{-1} b a^{-1} b)^2 \\
r_5 =& b a^{-1} b^-2 a c^{-1} a b^{-1} c^{-1} \\
r_6 =& c^{-1} a b^{-1} c^{-1} a c b^{-1} a^{-1} c^{-1} \\
r_7 =& (b^2 a^{-1} b^{-1} a^{-1})^2 \\
r_8 =& (b a b^{-1} a^{-1} c^{-1})^2
\end{align*}

%%%%%%%%%%%%%%%%%%%%

\subsubsection*{S = \{5, 7\}}

\noindent
Generators:
\begin{align*}
a =& 1 - I + J - I J \\
b =& -J - 2 I J \\
c =& -1 + I + J - 5 I J
\end{align*}

\noindent
Relators:
\begin{align*}
r_1 =& b^2 \\
r_2 =& a^3 \\
r_3 =& (c^{-1} a b)^2 \\
r_4 =& (a, c^{-1})^2 \\
r_5 =& (b c a^{-1} c^{-1} a)^2 \\
r_6 =& (c a^{-1} c^{-1} a^{-1})^3 \\
r_7 =& c a c^{-1} a b c a c^{-1} a^{-1} c^{-1} a^{-1} b a c a \\
r_8 =& c^{-1} a^{-1} b a c^2 a c^{-1} a^{-1} b a c a c^{-1} a
\end{align*}

%%%%%%%%%%%%%%%%%%%%

\subsubsection*{S = \{3, 5, 7\}}

\noindent
Generators:
\begin{align*}
a =& 1 + I - J - I J \\
b =& -1 - I - J - 3 I J \\
c =& -I - 2 I J \\
d =& -1 - I - J - 5 I J
\end{align*}

\noindent
Relators:
\begin{align*}
r_1 =& c^2 \\
r_2 =& a^3 \\
r_3 =& b^{-1} d a d^{-1} b a^{-1} \\
r_4 =& b d c d^{-1} b^{-1} c \\
r_5 =& c a^{-1} d^{-1} c a d \\
r_6 =& (d^{-1}, a)^2 \\
r_7 =& (d b a^{-1} d)^2 \\
r_8 =& (c a^{-1} b^2)^2 \\
r_9 =& (d a b a^{-1})^2 \\
r_{10} =& b^{-1} d c a d^{-1} b^{-1} a c a^{-1} \\
r_{11} =& c a^{-1} b^{-1} a^{-1} c d a^{-1} d^{-1} b^{-1} a \\
r_{12} =& b a d a d^{-1} a b^2 a b^{-1} a \\
r_{13} =& d^{-1} b^{-1} a^{-1} b d^{-1} b^2 a d a d^{-1} \\
r_{14} =& (a^{-1} d a^{-1} d^{-1})^3 \\
r_{15} =& d^2 a d^{-1} a b d^{-1} a^{-1} d a^{-1} d^{-1} b^{-1} \\
r_{16} =& d^{-1} a^{-1} b^{-1} a c b^{-1} a d^{-1} a c a^{-1} d a^{-1} d^{-1} \\
r_{17} =& c d a^{-1} d^{-1} a^{-1} d^{-1} a^{-1} b^{-1} a c b^{-1} d^{-1} a d a d^{-1} \\
r_{18} =& (d a^{-1} d a^{-1} d^{-1} a^{-1} c a^{-1})^2
\end{align*}

%%%%%%%%%%%%%%%%%%%%

\subsubsection*{S = \{3, 5, 11\}}

\noindent
Generators:
\begin{align*}
a =& -1 - I - J - 3 I J \\
b =& -1 - 2 I J \\
c =& -1 + J - 3 I J
\end{align*}

\noindent
Relators:
\begin{align*}
r_1 =& b^2 c b a^{-1} c^{-2} b^{-1} a c^{-1} \\
r_2 =& (b^{-1} c^{-1} b^{-1} a c^{-1} a^{-1})^2 \\
r_3 =& b c b a b a^{-1} b^{-1} c^{-1} b^{-1} a c^{-1} b^{-1} c a^{-1} \\
r_4 =& b a c^{-1} b^{-1} a c^{-1} a^2 b^{-1} c^{-1} b^{-1} a c^{-1} a \\
r_5 =& b^{-2} a^{-1} b^{-1} c^{-1} b^{-1} c^{-1} b^{-1} a c^{-1} a b a^{-1} b^{-1} c^{-1} b^{-1} \\
r_6 =& b a c^{-1} b^{-1} a c^{-1} a b^{-1} c a^{-1} c a^{-1} b c a^{-1} c^{-1} \\
r_7 =& c a^{-1} b c b^{-1} a^{-1} b^{-1} c b^{-1} c^{-1} b^{-1} a c^{-1} a b c \\
r_8 =& c b^2 a c^{-1} b^{-1} a c^{-1} a c a^{-1} b c b a^{-1} c a^{-1} b \\
r_9 =& a^{-1} c^{-1} a^{-1} c a^{-1} b c b^{-1} a c^{-1} b a b^{-1} c^{-1} b^{-2} a c \\
r_{10} =& c^2 a^{-1} b c b a^{-2} c^{-2} b^{-1} a^2 b^{-1} c^{-1} b^{-1} a b \\
r_{11} =& c^{-1} a^{-1} b^2 c b a^{-1} b^{-1} a^{-1} b^{-1} a b^2 a c^{-1} b^{-1} a c^{-1} \\
r_{12} =& b c a c^{-1} b^{-1} a c^{-1} a c^{-2} b^{-1} a^{-1} c a^{-1} b c a^{-1} c \\
r_{13} =& (b^2 c a^{-1} b c b^2 a)^2 \\
r_{14} =& a^{-2} c a^{-2} c a^{-1} b c a^{-1} c a^{-2} b a b c^{-1} b^{-1} a c^{-1} \\
r_{15} =& b a b^2 a c^{-1} b^{-2} a b^{-1} a^{-1} b^{-2} a^{-3} b^{-1} c^{-1} b^{-1} a \\
r_{16} =& a c b c b a^{-1} c^{-1} b^{-1} c^{-1} b^{-1} a c^{-1} b c b a b^{-1} a^{-1} b^{-2} a^{-1} b^{-1} \\
r_{17} =& b a b^2 a b a c^{-1} a b a b c^{-1} b^{-1} a c^{-1} b a^{-1} b c b a \\
r_{18} =& b^2 c a^{-1} b c a^{-1} c b a^{-1} c^{-1} a^{-1} c a^{-1} b c b a^{-1} b a^{-1} b a \\
r_{19} =& b^{-2} c^{-1} b^{-1} a c^{-1} a b^{-1} a b^{-1} a^{-1} b^{-2} a^{-1} b c b^{-1} a^{-1} c a^{-1} b^{-1} a \\
r_{20} =& (b a c^{-1} a^{-1} c a^{-1} b c b^2 a)^2 \\
r_{21} =& (c^{-1} a^{-2} c a^{-1} b c b c^{-1} b^{-1} a c^{-1} a^{-1})^2 \\
r_{22} =& a^{-1} b c b a c^{-1} b^{-1} a c^{-1} a^3 c a^{-1} b c b^{-1} a^{-1} \\
&b^{-1} c a^{-1} b c a^{-1} b^{-1} c b^{-2} c^{-1} b^{-1} a c^{-1} b^{-2} a^{-1} c b c
\end{align*}

%%%%%%%%%%%%%%%%%%%%

\section{Sage code}\label{sec:code}

%%%%%%%%%%%%%%%%%%%%

Below is the Sage \cite{Sage} code we used to compute our presentations via Algorithm \ref{alg:better}. Note that our code uses Magma during the Sage routine.

%%%%%%%%%%%%%%%%%%%%

\begin{scriptsize}
\begin{verbatim}
H.<i,j,k>=QuaternionAlgebra(QQ,-1,-1)

# On input an integer n, outputs all elements of reduced norm n in the 
# Hurwitz order O
def findofnorms(n): 
  m = 4*n
  U = []
  for a4 in xrange(-sqrt(m),sqrt(m)+1):
    for a3 in xrange(-sqrt(m-a4^2),sqrt(m-a4^2)+1):
      for a2 in xrange(-sqrt(m-a4^2-a3^2),sqrt(m-a4^2-a3^2)+1):
        for a1 in xrange(-sqrt(m-a4^2-a3^2-a2^2),sqrt(m-a4^2-a3^2-a2^2)+1):
          if m-a4^2-a3^2-a2^2-a1^2 == 0:
            U = U+[H([1/2*a1, 1/2*a2, 1/2*a3, 1/2*a4])] # element is in O
  return U

  
# Find elements of reduced norm n, when n is prime, in the Hurwitz order O up 
# to left O^* orbit. If n is one, returns O^*/<+-1>.
def findofnormso(n):
  U = findofnorms(n)
  t = len(U)
  Ostar = findofnorms(1) # O^*
  if n == 1:
    return [r for r in U if r>-r]
  if n == 2:
    n = 0
  for i in xrange(n+1):
    b = 1/U[i]
    j = i+1
    while j < t:
      if U[j]*b in Ostar:
        U.pop(j)
        t = t-1
      else:
        j += 1
  return U

  
# returns max(a,-a)      
def bigger(a):
    return a if a >- a else -a


# On input a list a=S of different primes, finds a presentation for 
# overline{O_S}^* and writes it to magmastring.c, to be read by Magma 
def findpresentation(a):
  a = [1]+a
  si = []
  r = [0]
  dummy = 0
  counter = [0,0,0]
  for i in xrange(0,len(a)):
    b = findofnormso(a[i])
    r.append(r[i]+len(b))
    si.append(b)
  str = 'F<[t]>:=FreeGroup('+repr(r[len(a)])+');\nFP:=quo<F|['

  # p*p relations (includes O^* O^*): Type 1,2
  for s in xrange(len(a)):
    for u in xrange(r[s+1]-r[s]):
      for v in xrange(r[s+1]-r[s]):
        try:
          l=si[0].index(bigger(si[s][u]*si[s][v]/a[s]))
          counter[0] += 1
          if dummy == 0:
            str += 't['+repr(r[s]+u+1)+']*t['+repr(r[s]+v+1)+']=t['+repr(l+1)+']'
            dummy = 1
          else:
            str += ',t['+repr(r[s]+u+1)+']*t['+repr(r[s]+v+1)+']=t['+repr(l+1)+']'
        except ValueError:
          pass

  # p*p' relations: Type 3
  for t in xrange(1,len(a)):
    for s in xrange(t+1,len(a)):
      ind = []
      li = []
      for u1 in xrange(r[t+1]-r[t]):
        for u2 in xrange(r[s+1]-r[s]):
          for d in xrange(r[1]-r[0]):
            ind.append(bigger(si[0][d]*si[t][u1]*si[s][u2]))
            li.append([d,u1,u2])
      for u1 in xrange(r[t+1]-r[t]):
        for u2 in xrange(r[s+1]-r[s]):       
          l = ind.index(bigger(si[s][u2]*si[t][u1]))        
          str += ',t['+repr(r[s]+u2+1)+']*t['+repr(r[t]+u1+1)
          str += ']=t['+repr(r[0]+li[l][0]+1)+']*t['
          str += repr(r[t]+li[l][1]+1)+']*t['+repr(r[s]+li[l][2]+1)+']'
          counter[1] += 1
  
  # p*d=d*p: Type 4
  for s in xrange(1,len(a)):
    ind = []
    li = []
    for u in xrange(r[s+1]-r[s]):
      for d in xrange(r[1]-r[0]):
        ind.append(bigger(si[s][u]*si[0][d]))
        li.append([u,d])
    for u in xrange(r[s+1]-r[s]):
      for d in xrange(r[1]-r[0]):
        l = ind.index(bigger(si[0][d]*si[s][u]))
        str += ',t['+repr(r[0]+d+1)+']*t['+repr(r[s]+u+1)+']=t['
        str += repr(r[s]+li[l][0]+1)+']*t['+repr(r[0]+li[l][1]+1)+']'
        counter[2] += 1

  str = str+']>;\n'+'R:=ReduceGenerators(FP);\n'+'print R;'
  f = open('magmastring2.c','w')
  f.write(str)
  f.close()
  print 'Type 1+2:', counter[0],'\nType 3:', counter[1],'\nType 4:', counter[2]

# On input a list n of different primes S, finds a short presentation for 
# overline{O_S}^* and appends it in results.txt, uses Magma
def findpresentation2(n):
  g = open('results.txt','a')
  findpresentation(n)
  g.write('Results on input '+repr(n)+':\n')
  g.write(magma.load('magmastring2.c'))
  g.write('\n\n')
  g.close()
\end{verbatim}
\end{scriptsize}

%%%%%%%%%%%%%%%%%%%%

\end{document}